\documentclass{amsart}
\usepackage{amsfonts}
\usepackage{graphicx}
\usepackage{amscd}

\setcounter{MaxMatrixCols}{10}

\newtheorem{theorem}{Theorem}[section]
\theoremstyle{plain}
\newtheorem*{Jor}{J\"{o}rgens' Theorem}
\newtheorem*{Lagrange}{Lagrange's Identity}

\newtheorem{corollary}[theorem]{Corollary}

\newtheorem{lemma}[theorem]{Lemma}

\theoremstyle{remark}
\newtheorem{remark}[theorem]{Remark}
\theoremstyle{example}

\numberwithin{equation}{section}

\input{tcilatex}

\begin{document}
\title[Minimal graphs]{Minimal graphs in $\mathbb{R}^{4}$ with bounded
Jacobians}
\author{Th. Hasanis}
\address{Department of Mathematics, University of Ioannina, 45110, Ioannina,
Greece}
\email{thasanis@uoi.gr, ansavas@cc.uoi.gr, tvlachos@uoi.gr}
\author{A. Savas-Halilaj}
\author{Th. Vlachos}
\subjclass{Primary 53C42}
\keywords{Minimal surface, Bernstein type theorem, Jacobian.}
\thanks{The second author is supported financially from the ''Foundation for
Education and European Culture''.}

\begin{abstract}
We obtain a Bernstein type result for entire two dimensional minimal graphs
in $\mathbb{R}^{4}$, which extends a previous one due to L. Ni.
Moreover, we provide a characterization for complex analytic curves.
\end{abstract}

\maketitle

\section{Introduction}

The famous theorem of Bernstein states that the only entire minimal graphs
in the Euclidean space $\mathbb{R}^{3}$ are planes. More precisely, if $f:%
\mathbb{R}^{2}\rightarrow \mathbb{R}$ is an entire (i.e., defined over all
of $\mathbb{R}^{2}$) smooth function whose graph%
\begin{equation*}
G_{f}:=\left\{ \left( x,y,f\left( x,y\right) \right) \in \mathbb{R}%
^{3}:\left( x,y\right) \in \mathbb{R}^{2}\right\}
\end{equation*}%
is a minimal surface, then it is an affine function, and the graph is a
plane.

This type of result has been generalized in higher dimension and codimension
under various conditions. See \cite{EH}, \cite{FC}, \cite{S} and the
references therein for the codimension one case and \cite{JX}, \cite{W}, 
\cite{Y} for the higher codimension case.

The aim of this paper is to study the following special case. Let $M$ be a
minimal surface in $\mathbb{R}^{4}$ that can be described as the graph of an
entire and smooth vector valued function $f:\mathbb{R}^{2}\rightarrow 
\mathbb{R}^{2},$ $f\left( x,y\right) =\left( f_{1}\left( x,y\right)
,f_{2}\left( x,y\right) \right) $, that is%
\begin{equation*}
M=G_{f}:=\left\{ \left( x,y,f_{1}\left( x,y\right) ,f_{2}\left( x,y\right)
\right) \in \mathbb{R}^{4}:\left( x,y\right) \in \mathbb{R}^{2}\right\} .
\end{equation*}%
The following question arises in a natural way: \textit{Is it true that the
graph }$G_{f}$\textit{\ of }$f$\textit{\ is a plane in }$\mathbb{R}^{4}$? In
general, the answer is no. An easy counterexample is given by the function $%
f\left( x,y\right) =\left( x^{2}-y^{2},2xy\right) $. Actually, the graph of
any holomorphic function $\Phi :\mathbb{C\rightarrow C}$ is a minimal
surface. So, the problem of finding geometric conditions in order to have a
result of Bernstein type is reasonable. In this direction, R. Schoen \cite%
{Sc} obtained a Bernstein type result by imposing the assumption that $f:%
\mathbb{R}^{2}\rightarrow \mathbb{R}^{2}$ is a diffeomorphism. Moreover, L.
Ni \cite{N} by using the result of R. Schoen \cite{Sc} and results due to J.
Wolfson \cite{Wo} on minimal Lagrangian surfaces has derived a result of
Bernstein type under the assumption that $f$ is an \textit{area-preserving
map}, that is the \textit{Jacobian} $J_{f}:=\det \left( df\right) $
satisfies $J_{f}=1$, where $df$ denotes the differential of $f$.

In this paper we prove, firstly, the following result of Bernstein type,
which generalizes the result due to L. Ni.

\begin{theorem}
Let $f:\mathbb{R}^{2}\mathbb{\rightarrow R}^{2}$ be an entire smooth vector
valued function such that its graph $G_{f}$ is a minimal surface in $\mathbb{%
R}^{4}$. If the Jacobian $J_{f}$ of $f$ is bounded, then $G_{f}$ is a plane.
\end{theorem}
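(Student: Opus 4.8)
The plan is to describe $M=G_{f}$ conformally, to read off its generalized Gauss map from a Weierstrass representation, and to show that the graph hypothesis together with the bound on $J_{f}$ forces this Gauss map to be constant. First I would equip $M$ with the conformal structure of its first fundamental form, making it a Riemann surface; it is simply connected, since the vertical projection $(x,y,f_{1},f_{2})\mapsto(x,y)$ is a diffeomorphism of $M$ onto $\mathbb{R}^{2}$. In the parameters $(x,y)$ the induced metric is $g_{ij}=\delta_{ij}+\partial_{i}f_{1}\partial_{j}f_{1}+\partial_{i}f_{2}\partial_{j}f_{2}$, so Lagrange's identity gives
\[
\det(g_{ij})=1+|\nabla f_{1}|^{2}+|\nabla f_{2}|^{2}+J_{f}^{2},
\]
and, since $|\nabla f_{1}|^{2}+|\nabla f_{2}|^{2}\ge 2|\nabla f_{1}|\,|\nabla f_{2}|\ge 2|J_{f}|$, one obtains the pointwise inequality $\det(g_{ij})\ge(1+|J_{f}|)^{2}$. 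In particular $g\ge\delta$, so $(M,g)$ is complete. Pulling back the two orthogonal K\"ahler forms $dx\wedge dy\pm df_{1}\wedge df_{2}=(1\pm J_{f})\,dx\wedge dy$ of $\mathbb{R}^{4}=\mathbb{C}^{2}$ and the area form $\sqrt{\det(g_{ij})}\,dx\wedge dy$, the two K\"ahler angles $\theta,\theta'$ of $M$ satisfy $\cos\theta=(1+J_{f})/\sqrt{\det(g_{ij})}$ and $\cos\theta'=(1-J_{f})/\sqrt{\det(g_{ij})}$.

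Next I would bring in the Weierstrass representation of minimal surfaces in $\mathbb{R}^{4}$: on the simply connected surface $M$ there are globally defined meromorphic functions $g_{1},g_{2}$ --- the two components of the generalized Gauss map, with values in $\mathbb{CP}^{1}\cong S^{2}$ --- and a holomorphic $1$-form $\eta$, for which the induced metric is $ds^{2}=(1+|g_{1}|^{2})(1+|g_{2}|^{2})\,|\eta|^{2}$ and, after a suitable labelling, $\cos\theta=(1-|g_{1}|^{2})/(1+|g_{1}|^{2})$ and $\cos\theta'=(1-|g_{2}|^{2})/(1+|g_{2}|^{2})$ (so that $g_{1}\equiv0$, resp.\ $g_{2}\equiv0$, signals that $M$ is a complex curve for one, resp.\ the other, of the two K\"ahler structures). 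Combining these with $\det(g_{ij})\ge(1+|J_{f}|)^{2}$ and $|J_{f}|\le c$ yields
\begin{align*}
|g_{1}|^{2}&=\frac{\sqrt{\det(g_{ij})}-(1+J_{f})}{\sqrt{\det(g_{ij})}+(1+J_{f})}\ \le\ \max\{1,c\},\\
|g_{2}|^{2}&=\frac{\sqrt{\det(g_{ij})}-(1-J_{f})}{\sqrt{\det(g_{ij})}+(1-J_{f})}\ \le\ \max\{1,c\}
\end{align*}
on all of $M$ (when $J_{f}\ge-1$ the first quotient is $<1$; when $J_{f}<-1$ it is decreasing in $\sqrt{\det(g_{ij})}$, hence, as $\sqrt{\det(g_{ij})}\ge 1-J_{f}$, at most $-J_{f}\le c$; similarly for the second). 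This is precisely where boundedness of $J_{f}$, rather than $J_{f}\equiv1$ as in Ni's theorem, is used.

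To conclude, since $g_{1},g_{2}$ are bounded they have no poles, so the formula for $ds^{2}$ together with the fact that the graph map is an immersion forces $\eta$ to be nowhere zero; since moreover $1\le(1+|g_{1}|^{2})(1+|g_{2}|^{2})$ is bounded above, the metrics $ds^{2}$ and $|\eta|^{2}$ are bi-Lipschitz equivalent, so $|\eta|^{2}$ is a smooth, complete, flat metric on the simply connected surface $M$. Hence $(M,|\eta|^{2})$ is isometric to the Euclidean plane, and $M$ is conformally $\mathbb{C}$. Then $g_{1}$ and $g_{2}$ are bounded holomorphic functions on $\mathbb{C}$, hence constant by Liouville's theorem, so all tangent planes of $M$ are parallel and $M$ lies in an affine $2$-plane, which, as $M$ is an entire graph, must be non-vertical and equal to $G_{f}$. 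Therefore $f$ is affine. (As a consistency check, when $M$ is itself a complex or anti-complex curve --- i.e.\ one of $g_{1},g_{2}$ vanishes identically --- then $f$ is holomorphic or anti-holomorphic and $J_{f}=\pm|f'|^{2}$ for the relevant complex derivative $f'$, which is therefore a bounded entire function and hence constant.)

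The step I expect to be the main obstacle is the passage from ``$G_{f}$ is a graph with $|J_{f}|\le c$'' to the boundedness of the Gauss data $g_{1},g_{2}$. Everything there rests on the elementary inequality $\det(g_{ij})\ge(1+|J_{f}|)^{2}$, on a careful accounting of the two K\"ahler structures of $\mathbb{R}^{4}$ (the graph structure is what makes the K\"ahler angles computable in terms of $J_{f}$ as above, and the bound on $J_{f}$ then keeps each K\"ahler angle away from $\pi$), and on the Weierstrass dictionary relating the K\"ahler angles to $|g_{1}|$ and $|g_{2}|$. Once $g_{1},g_{2}$ are bounded, the parabolicity of $M$ obtained from the flat metric $|\eta|^{2}$ and the concluding Liouville argument are routine.
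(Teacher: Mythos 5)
Your proposal is correct, and it reaches the conclusion by a genuinely different route from the paper. The paper invokes Osserman's theorem to obtain global isothermal parameters $(u,v)$ outright (so the conformal type $\mathbb{C}$ is imported as a black box), then uses Lagrange's identity to derive $J_{\Phi }^{2}=E^{2}-\left( 1+a^{2}+b^{2}\right) E+b^{2}$ for the conformal factor $E$; boundedness of $J_{f}$ then bounds $E$, and since $\log E$ is subharmonic (as $K\leq 0$) and bounded above on the plane it is constant, whence $K\equiv 0$ and $G_{f}$ is totally geodesic. You instead use the same Lagrange-identity computation in the form $\det \left( g_{ij}\right) =1+\left\vert \nabla f_{1}\right\vert ^{2}+\left\vert \nabla f_{2}\right\vert ^{2}+J_{f}^{2}\geq \left( 1+\left\vert J_{f}\right\vert \right) ^{2}$, convert the bound on $J_{f}$ into bounds on the two components $g_{1},g_{2}$ of the generalized Gauss map via the K\"{a}hler angles, and establish parabolicity yourself from completeness of the nowhere-degenerate flat metric $\left\vert \eta \right\vert ^{2}$ rather than citing Osserman; Liouville applied to $g_{1},g_{2}$ then replaces the subharmonicity argument. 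I checked the pointwise estimates: the denominator $\sqrt{\det \left( g_{ij}\right) }+\left( 1\pm J_{f}\right) $ is always positive and your case analysis giving $\left\vert g_{i}\right\vert ^{2}\leq \max \left\{ 1,c\right\} $ is sound. What your approach buys is conceptual transparency (it isolates the role of the two complex structures of $\mathbb{R}^{4}$ and explains why complex analytic curves are the borderline case, which is exactly the theme of the paper's Theorem 1.2) at the cost of importing the $\mathbb{R}^{4}$ Weierstrass machinery and the K\"{a}hler-angle dictionary; the paper's argument is shorter and more elementary, resting on a single external theorem. The only points needing care in your write-up are the labelling conventions identifying $\cos \theta $ with $\left( 1-\left\vert g_{1}\right\vert ^{2}\right) /\left( 1+\left\vert g_{1}\right\vert ^{2}\right) $ (which you flag) and an explicit reference for the global Weierstrass representation with metric $\left( 1+\left\vert g_{1}\right\vert ^{2}\right) \left( 1+\left\vert g_{2}\right\vert ^{2}\right) \left\vert \eta \right\vert ^{2}$ on a simply connected minimal surface, e.g. Hoffman--Osserman.
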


As a consequence we derive an easy alternative proof of the following well
known result of J\"{o}rgens \cite{J}:

\begin{Jor}
\textit{The only entire solutions }$f:\mathbb{R}^{2}\rightarrow \mathbb{R}$%
\textit{\ of the Monge-Ampere equation }$f_{xx}f_{yy}-f_{xy}^{2}=1$\textit{\
are the quadratic polynomials.}
\end{Jor}

There are plenty of entire minimal graphs in $\mathbb{R}^{4}$, the so called 
\textit{complex analytic curves}. More precisely, if $\Phi :\mathbb{%
C\rightarrow C}$ is any entire holomorphic or anti-holomorphic function,
then the graph 
\begin{equation*}
G_{\Phi }=\left\{ \left( z,\Phi \left( z\right) \right) \in \mathbb{C}%
^{2}:z\in \mathbb{C}\right\}
\end{equation*}%
of $\Phi $ in $\mathbb{C}^{2}=\mathbb{R}^{4}$ is a minimal surface and is
called a complex analytic curve. Such surfaces are locally characterized
(see for example L.P Eisenhart \cite{E}) by the relation $\left\vert
K\right\vert =\left\vert K_{N}\right\vert $, where $K$ and $K_{N}$ stand for
the Gauss and normal curvature of the surface, respectively. The following
result is in valid.

\begin{theorem}
Let $G_{f}$ be the graph of an entire smooth vector valued function $f:%
\mathbb{R}^{2}\mathbb{\rightarrow R}^{2}$ with Gaussian curvature $K$ and
normal curvature$\ K_{N}$. Assume that $G_{f}$ is minimal in $\mathbb{R}^{4}$%
. Then,%
\begin{equation*}
\inf\limits_{K<0}\frac{\left\vert K_{N}\right\vert }{\left\vert K\right\vert 
}=0,
\end{equation*}%
unless $G_{f}$ is a complex analytic curve.
\end{theorem}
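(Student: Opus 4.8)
The plan is to reduce Theorem 1.2 to Theorem 1.1 by exploiting the relation between the Jacobian of $f$ and the curvatures $K$, $K_N$ of the graph. First I would set up the local formulas for a minimal graph $G_f$ over $\mathbb{R}^2$: write $W = \sqrt{\det(I + df^t df)}$, express the induced metric and the second fundamental form in terms of the partial derivatives $f_i$, and recall the Gauss equation $K = $ (intrinsic curvature) together with the expression for the normal curvature $K_N$ as a sum over the two normal directions. The key algebraic identity to establish is that, for a minimal graph, $|K| - |K_N|$ and $J_f$ are linked: I expect a formula of the shape $|K| - |K_N| = \bigl(\text{something nonnegative}\bigr)$ that vanishes exactly when $G_f$ is a complex analytic curve, and more precisely that the ratio $|K_N|/|K|$ on the set $\{K<0\}$ controls, via an integrable/Jacobi-type quantity, the behaviour of $J_f$. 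This is essentially the local characterization $|K| = |K_N| \Leftrightarrow$ complex analytic curve of Eisenhart quoted in the excerpt.

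Next, arguing by contradiction, suppose $G_f$ is not a complex analytic curve and yet $\inf_{K<0} |K_N|/|K| = c > 0$. The strategy is to show that this lower bound forces $J_f$ to be bounded, so that Theorem 1.1 applies and $G_f$ is a plane — but a plane is (a degenerate) complex analytic curve, contradicting our assumption (or, more cleanly, on a plane $K \equiv K_N \equiv 0$ so the infimum is over the empty set and the hypothesis is vacuous, again a contradiction with $c>0$). To make the implication "$\inf |K_N|/|K| > 0 \Rightarrow J_f$ bounded" work, I would use the Gauss map / lift to the Grassmannian $G_{2,4} = S^2 \times S^2$: a minimal graph has a (conformal, harmonic) Gauss map with components into the two spheres, the Jacobian $J_f$ is expressed through the heights of these components (something like $J_f = (1 - |g_1|^2)(\dots)/W$ in suitable stereographic coordinates), and the ratio $|K_N|/|K|$ being bounded below means the two spherical components have comparable energy densities. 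Then a normal-curvature sign condition plus the graph condition ($W<\infty$, i.e. the Gauss map avoids an equator) should pin $J_f$ between bounds.

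The main obstacle I anticipate is precisely this last step: translating the pointwise curvature-ratio bound into a pointwise bound on $J_f$. The cleanest route is probably to avoid Theorem 1.1 as a black box and instead mimic its proof: one typically writes $J_f$ (or $1 \pm J_f/W$, or $\log$ of such a quantity) as a function satisfying a differential inequality — subharmonic or superharmonic with respect to the induced metric — and then invokes a Liouville-type theorem (the surface being parabolic, e.g. via the Cheng--Yau / Colding--Minicozzi area growth for minimal graphs, or via conformal parametrization by $\mathbb{C}$). The curvature-ratio hypothesis $|K_N| \ge c|K|$ on $\{K<0\}$ should be exactly what is needed to close that differential inequality — it controls the "bad" term by the "good" Laplacian term. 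I would therefore: (i) pick a global conformal coordinate $z$ on $G_f$ (the surface is conformally $\mathbb{C}$ since it is a complete parabolic minimal surface, or one uses that the universal cover is, together with the graph hypothesis to rule out $S^2$); (ii) express $K$, $K_N$, and an auxiliary function $\varphi$ measuring "failure of complex-analyticity" (e.g. $\varphi = |K| - |K_N| \ge 0$) in this coordinate; (iii) derive $\Delta \log \varphi \ge -2K$ or a similar inequality off the zero set of $\varphi$; (iv) combine with $-K \le |K| \le |K_N|/c$ and a maximum principle / value-distribution argument to conclude $\varphi \equiv 0$, i.e. $|K| \equiv |K_N|$ everywhere, which by Eisenhart's characterization means $G_f$ is a complex analytic curve — the desired contradiction. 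I expect step (iii), getting the right sign and the right coefficient in the Bochner-type formula for $\varphi$, to be where the real work lies, and where minimality of the graph is used decisively.
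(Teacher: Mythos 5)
Your central reduction does not work as stated, and it points in the wrong direction. The hypothesis $\inf_{K<0}|K_N|/|K|>0$ does \emph{not} force the Jacobian to be bounded \emph{above}, which is what Theorem 1.1 needs; it forces it to be bounded \emph{below away from zero}. Concretely, the paper passes to Osserman's global isothermal parameters $x=u$, $y=au+bv$ and computes explicit formulas for $K$ and $K_N$, obtaining $K_N^2/K^2=4b^2W(E)$ with $W$ an increasing function of the conformal factor $E$, together with the identity $J_\Phi^2=E^2-(1+a^2+b^2)E+b^2$. When $(a,b)\neq(0,1)$ one has $K<0$ on a dense set, and the ratio hypothesis then yields $\inf|J_\Phi|>0$ --- while $E$, and hence $J_\Phi$, may perfectly well be unbounded on a non-planar graph, so Theorem 1.1 is inapplicable. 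The Liouville-type input actually needed is a different one (the paper's Lemma 3.2): an entire harmonic map $\Phi=(\varphi,\psi)$ with $|J_\Phi|\geq c>0$ is affine, proved by noting that $J_\Phi=|\Phi_z|^2-|\Phi_{\bar z}|^2$, that $\Phi_z$ is an entire holomorphic function omitting a disk (hence constant by Picard), and then applying Liouville to the bounded harmonic derivatives. Your proposal contains no substitute for this lemma.

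Your fallback route (a Bochner-type inequality for $\varphi=|K|-|K_N|$, plus parabolicity) is left open precisely at the step you yourself identify as the real work, so it cannot be assessed as a proof; and it also misses the actual source of the dichotomy in the theorem. In the paper the alternative ``complex analytic curve'' does not arise from showing $|K|\equiv|K_N|$ by a maximum principle, but from the trichotomy on Osserman's constants: either $(a,b)\neq(0,1)$, in which case the ratio bound plus Lemma 3.2 forces the graph to be a plane (contradiction), or $(a,b)=(0,1)$, in which case the isothermal relations become exactly the Cauchy--Riemann equations $\varphi_u=\pm\psi_v$, $\varphi_v=\mp\psi_u$, i.e.\ $G_f$ is a complex analytic curve. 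You would need to either supply the missing Bochner computation with the correct signs and show how the hypothesis closes it, or adopt the coordinate-based argument; as written, the proposal is a plan with its two load-bearing steps unproved, and its first step resting on a false implication (ratio bound $\Rightarrow$ $J_f$ bounded above).
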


\begin{corollary}
Let $G_{f}$ be the graph of an entire smooth vector valued function $f:%
\mathbb{R}^{2}\mathbb{\rightarrow R}^{2}$ with Gaussian curvature $K$ and
normal curvature$\ K_{N}$. If $K_{N}=cK$, where $c$ is a constant, then $%
G_{f}$ is a complex analytic curve. More precisely, $K_{N}=K=0$ and $G_{f}$
is a plane or $\left\vert c\right\vert =1$ and $G_{f}$ is a non-trivial
complex analytic curve.
\end{corollary}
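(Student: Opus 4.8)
The plan is to deduce the corollary from the preceding theorem, distinguishing only whether $G_{f}$ is a plane. If $G_{f}$ is a plane, then $K\equiv K_{N}\equiv 0$ and the first alternative holds, so from now on assume $G_{f}$ is not a plane. Since $G_{f}$ is minimal but not totally geodesic, the Gauss equation gives $K\leq 0$ with $K\not\equiv 0$, so $\Omega:=\{K<0\}$ is a nonempty open subset of $G_{f}$. On $\Omega$ the hypothesis $K_{N}=cK$ reads $|K_{N}|/|K|\equiv |c|$, whence
\[
\inf_{K<0}\frac{|K_{N}|}{|K|}=|c|.
\]
If $|c|>0$, this infimum is positive, so the preceding theorem forces $G_{f}$ to be a complex analytic curve; since $|K|=|K_{N}|$ holds identically on such a surface, the displayed equality gives $|c|=1$, and $G_{f}$ is a non-trivial complex analytic curve, i.e. the second alternative. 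Thus the corollary will follow once we rule out $c=0$ for a non-planar $G_{f}$.

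So the heart of the matter is the claim that an entire minimal graph in $\mathbb{R}^{4}$ with $K_{N}\equiv 0$ must be a plane, and here is how I would prove it. By the Ricci equation, $K_{N}\equiv 0$ means that the two shape operators of $G_{f}$ commute; minimality makes these operators trace-free, and two commuting trace-free symmetric operators on a $2$-plane are proportional, so on the open set $\{A\neq 0\}$, where $A$ denotes the second fundamental form, the image of $A$ lies in a smooth line subbundle $\mathcal{L}$ of the normal bundle. A direct computation with the Codazzi equation then shows that $\mathcal{L}$ is parallel with respect to the normal connection there; by the classical reduction-of-codimension theorem, combined with the real-analyticity of the connected surface $G_{f}$, the whole of $G_{f}$ lies in an affine $3$-dimensional subspace $\Pi$ (if $A\equiv 0$ then $G_{f}$ is already a plane). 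At this point I use that $G_{f}$ is a graph: since $G_{f}$ projects diffeomorphically onto the first $\mathbb{R}^{2}$-factor, $\Pi$ cannot be the product of a line in that factor with the orthogonal $\mathbb{R}^{2}$, so writing $\Pi$ as a graph over a suitable coordinate $3$-space exhibits $G_{f}$ as an entire minimal graph over a plane inside $\Pi\cong\mathbb{R}^{3}$. The classical Bernstein theorem then makes $G_{f}$ a plane, contradicting $K\not\equiv 0$; hence $c=0$ is impossible.

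I expect the last paragraph to be the only real obstacle; the rest is bookkeeping around the preceding theorem and the local characterization of complex analytic curves. If one prefers to avoid the Ricci--Codazzi--Erbacher route, the statement that a minimal surface in $\mathbb{R}^{4}$ with vanishing normal curvature is, up to congruence, a minimal surface in $\mathbb{R}^{3}$ is classical and may simply be cited before invoking Bernstein's theorem.
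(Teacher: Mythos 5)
Your argument is correct, and for the crucial step --- ruling out $c=0$ when $G_{f}$ is not a plane --- it takes a genuinely different route from the paper. The reduction to the preceding theorem (the infimum equals $|c|$ on the nonempty open set $\{K<0\}$, so either $c=0$ or $G_{f}$ is a complex analytic curve with $|c|=|K_{N}|/|K|=1$) is the same in both treatments. For the case $c=0$, the paper stays entirely inside the coordinate machinery it built for Theorem 1.2: on the dense set $M_{1}$ the explicit formula $(3.12)$ shows $K_{N}=0$ forces $J_{\Phi}=0$, then the identity $(3.5)$, $J_{\Phi }^{2}=E^{2}-\left( 1+a^{2}+b^{2}\right) E+b^{2}$, pins $E$ to a constant, whence $K\equiv 0$, a contradiction. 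You instead invoke the classical structure theory: $K_{N}\equiv 0$ plus minimality makes the two trace-free shape operators proportional, Codazzi makes the first normal bundle parallel on $\{A\neq 0\}$, Erbacher's reduction of codimension together with real-analyticity puts $G_{f}$ into an affine $3$-space $\Pi$, and the graph property over the $(x,y)$-plane descends to a graph property inside $\Pi$, so Bernstein's theorem applies. This is sound, but two points deserve more care than your sketch gives them: (i) the parallelism of the line bundle $\mathcal{L}$ really does use trace-freeness (otherwise $h=c\,\beta\otimes\beta\,\xi$ survives the Codazzi computation), which you flag only in passing; and (ii) the final step should be made precise by taking $\ell=\Pi_{0}\cap(\{0\}\times\mathbb{R}^{2})$, checking $\ell$ is a line (it cannot be a plane, else $\Pi$ projects onto a line in the $(x,y)$-factor), and noting that orthogonal projection of $\Pi$ onto $\ell^{\perp}\cap\Pi_{0}$ has the same fibers as the projection to the $(x,y)$-plane, so $G_{f}$ is an honest entire graph over a plane in $\Pi$ with respect to the induced Euclidean structure. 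The trade-off: the paper's proof is short and self-contained given its Theorem 1.2 computations, while yours is coordinate-free and isolates the classical fact (minimal plus flat normal bundle reduces to $\mathbb{R}^{3}$) at the cost of importing Erbacher's theorem and Bernstein's theorem as external inputs.
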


\section{Basic Notation and Definitions}

A surface $M$ in the Euclidean space $\mathbb{R}^{n}$ is represented,
locally, by a transformation $X:D\subset \mathbb{R}^{2}\rightarrow \mathbb{R}%
^{n}$ of rank 2, given by 
\begin{equation*}
X\left( x,y\right) =\left( f_{1}\left( x,y\right) ,f_{2}\left( x,y\right)
,\cdots ,f_{n}\left( x,y\right) \right) ,\text{ \ \ \ }\left( x,y\right) \in
D,
\end{equation*}%
where $D$ is an open subset of $\mathbb{R}^{2}$ and $f_{i}:D\rightarrow 
\mathbb{R}$, $i\in \left\{ 1,\cdots ,n\right\} ,$ are smooth functions.
Following the standard notation of differential geometry, we denote by $%
\left\langle \text{ },\text{ }\right\rangle $ the Euclidean inner product on 
$\mathbb{R}^{n}$ and by $E$, $F$, $G$ the \textit{coefficients of the first
fundamental form,} which are given by 
\begin{equation*}
E=\sum_{i=1}^{n}\left( \frac{\partial f_{i}}{\partial x}\right) ^{2}\text{, }%
F=\sum_{i=1}^{n}\frac{\partial f_{i}}{\partial x}\frac{\partial f_{i}}{%
\partial y}\text{, }G=\sum_{i=1}^{n}\left( \frac{\partial f_{i}}{\partial y}%
\right) ^{2}.
\end{equation*}%
We recall that the parameters $\left( x,y\right) $ are called \textit{%
isothermal} if and only if $E=G$ and $F=0$, everywhere on $D$.

Consider a local orthonormal frame field $\left\{ e_{1},e_{2};\xi
_{3},\cdots ,\xi _{n}\right\} $ in $\mathbb{R}^{n}$ such that, restricted to 
$M$, the vectors $e_{1},e_{2}$ are tangent to $M$ and, consequently, $\xi
_{3},\cdots ,\xi _{n}$ are normal to $M$. Denote by $\overline{\nabla }$ the
usual linear connection on $\mathbb{R}^{n}$ and let 
\begin{equation*}
h_{ij}^{\alpha }=\left\langle \overline{\nabla }_{e_{i}}\xi _{\alpha
},e_{j}\right\rangle ,\quad i,j\in \left\{ 1,2\right\} ,\quad \alpha \in
\left\{ 3,\cdots ,n\right\} ,
\end{equation*}%
be the \textit{coefficients of the second fundamental form}.

The \textit{mean curvature vector} $H$ and the \textit{Gauss curvature} $K$
of $M$ are given, respectively, by%
\begin{eqnarray*}
H &=&\frac{1}{2}\dsum\limits_{\alpha =3}^{n}\left( h_{11}^{\alpha
}+h_{22}^{\alpha }\right) \xi _{\alpha }, \\
K &=&\dsum\limits_{\alpha =3}^{n}\left( h_{11}^{\alpha }h_{22}^{\alpha
}-\left( h_{12}^{\alpha }\right) ^{2}\right) .
\end{eqnarray*}%
Moreover, if%
\begin{equation*}
\left\vert h\right\vert ^{2}=\dsum\limits_{i,j=1}^{2}\dsum\limits_{\alpha
=3}^{n}\left( h_{ij}^{\alpha }\right) ^{2}
\end{equation*}%
is the square of the length of the second fundamental form $h$, then the
Gauss equation implies%
\begin{equation*}
2K=4H^{2}-\left\vert h\right\vert ^{2}.
\end{equation*}%
In the case where $M$ is \textit{minimal}, i.e., $H=0$, the above become%
\begin{eqnarray}
K &=&-\dsum\limits_{\alpha =3}^{n}\left\{ \left( h_{11}^{\alpha }\right)
^{2}+\left( h_{12}^{\alpha }\right) ^{2}\right\} , \\
2K &=&-\left\vert h\right\vert ^{2}.
\end{eqnarray}%
Another geometric invariant which plays an important role in the theory of
surfaces in $\mathbb{R}^{4}$ is the \textit{normal curvature} $K_{N}$ of $M$
which is given by%
\begin{equation*}
K_{N}=\dsum\limits_{i=1}^{2}\left(
h_{i1}^{3}h_{2i}^{4}-h_{2i}^{3}h_{1i}^{4}\right) .
\end{equation*}%
In particular, for minimal surfaces we have%
\begin{equation}
K_{N}=2\left( h_{11}^{3}h_{12}^{4}-h_{12}^{3}h_{11}^{4}\right) .
\end{equation}

One of the simplest ways to express a surface in $\mathbb{R}^{n+2}$ is in 
\textit{non-parametric form}, that is to say, as the graph%
\begin{equation*}
G_{f}=\left\{ \left( x,y,f_{1}\left( x,y\right) ,\cdots ,f_{n}\left(
x,y\right) \right) \in \mathbb{R}^{n+2}:\left( x,y\right) \in D\right\}
\end{equation*}%
of a vector valued map $f:D\rightarrow \mathbb{R}^{n}$, $f\left( x,y\right)
=\left( f_{1}\left( x,y\right) ,\cdots ,f_{n}\left( x,y\right) \right) $,
where $D$ is an open subset of $\mathbb{R}^{2}$. Of course, any surface can
be locally described in this manner. By computing the Euler-Lagrange
equations for the area integral we see that the surface $G_{f}$ is minimal
if and only if $f$ satisfies the following equation,%
\begin{equation}
\left( 1+\left\vert f_{y}\right\vert ^{2}\right) f_{xx}-2\left\langle
f_{x},f_{y}\right\rangle f_{xy}+\left( 1+\left\vert f_{x}\right\vert
^{2}\right) f_{yy}=0.
\end{equation}%
This is the classical \textit{non-parametric minimal surface equation}.

The following result due to R. Osserman \cite[Theorem 5.1]{O} is the main
tool for the proofs of our results.

\begin{theorem}
Let $f:\mathbb{R}^{2}\rightarrow \mathbb{R}^{2}$ be an entire solution of
the minimal surface equation. Then there exists real constants $a$, $b,$
with $b>0$, and a non-singular linear transformation 
\begin{equation*}
x=u,\quad y=au+bv,
\end{equation*}%
such that $\left( u,v\right) $ are global isothermal parameters for the
surface $G_{f}$.
\end{theorem}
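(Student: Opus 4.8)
The plan is to equip $G_{f}$ with its natural structure of a simply connected Riemann surface, to identify it conformally with $\mathbb{C}$ (the crux), and then to extract the asserted linear normalisation from Liouville's theorem.

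\textbf{Step 1 (set-up).} By the Korn--Lichtenstein theorem the induced metric $g=E\,dx^{2}+2F\,dx\,dy+G\,dy^{2}$ admits local isothermal coordinates, so $G_{f}$ carries a conformal structure; since the parameter domain is all of $\mathbb{R}^{2}$ it is simply connected, hence, by uniformisation, conformally equivalent either to $\mathbb{C}$ or to the unit disc $\mathbb{D}$. Two elementary facts will be used. First, for every tangent vector $w$ to $G_{f}$ one has $|w|_{g}^{2}=|w|_{0}^{2}+|df(w)|^{2}\geq |w|_{0}^{2}$, where $|\cdot |_{0}$ refers to the flat metric $dx^{2}+dy^{2}$ of the base; thus $(G_{f},g)$ is complete and the projection $\pi =(x,y):G_{f}\rightarrow \mathbb{R}^{2}$ is a diffeomorphism. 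Second, $x$ and $y$ are restrictions to $G_{f}$ of linear functions on $\mathbb{R}^{4}$, so --- $G_{f}$ being minimal --- they are $g$-harmonic, and $dx,dy$ are everywhere nonzero and independent since $G_{f}$ is a graph over the $(x,y)$-plane; writing $W=\sqrt{EG-F^{2}}$ and using $W^{2}\geq 1+|f_{x}|^{2}+|f_{y}|^{2}$ one obtains $|\nabla _{g}x|_{g}^{2}=G/W^{2}\leq 1$ and $|\nabla _{g}y|_{g}^{2}=E/W^{2}\leq 1$.

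\textbf{Step 2 (conformal type).} It remains to exclude $G_{f}\cong \mathbb{D}$, i.e. to show that the entire minimal graph $G_{f}$ is conformally parabolic; I expect this to be the main obstacle. One way: identifying $G_{f}$ with $\mathbb{R}^{2}$ via $\pi$, the function $\rho =x^{2}+y^{2}$ is a proper exhaustion of the complete surface $(G_{f},g)$ (its sublevel sets are closed Euclidean discs in the base) with $\Delta _{g}\rho =2|\nabla _{g}x|_{g}^{2}+2|\nabla _{g}y|_{g}^{2}\leq 4$ and $|\nabla _{g}\sqrt{\rho }|_{g}$ bounded, and one combines this with the minimal surface structure and a potential-theoretic/length--area argument to conclude that $G_{f}$ admits no Green's function, hence is not hyperbolic; alternatively one invokes this classical fact directly (see \cite{O}). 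In either case $G_{f}$ is conformally the whole plane $\mathbb{C}$.

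\textbf{Step 3 (linear normalisation).} Fix a global conformal parameter $w=u+iv$ on $G_{f}\cong \mathbb{C}$. Then $\phi _{1}:=\partial _{w}x$ and $\phi _{2}:=\partial _{w}y$ are entire, and $\phi _{1}$ is nowhere zero (because $dx\neq 0$). The Jacobian of the diffeomorphism $\pi $, computed in the parameter $w$, is a nonzero constant multiple of $\operatorname{Im}(\overline{\phi _{1}}\phi _{2})=|\phi _{1}|^{2}\operatorname{Im}(\phi _{2}/\phi _{1})$; it never vanishes, so the entire function $\phi _{2}/\phi _{1}$ takes values in a single open half-plane and is therefore constant by Liouville's theorem, say $\phi _{2}/\phi _{1}\equiv c$ with $\operatorname{Im}c\neq 0$. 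Put $\Phi :=2\int \phi _{1}\,dw$, an entire function with $\Phi ^{\prime }=2\phi _{1}\neq 0$ and, up to additive constants, $x=\operatorname{Re}\Phi $ and $y=\operatorname{Re}(c\Phi )=\operatorname{Re}(c)\operatorname{Re}\Phi -\operatorname{Im}(c)\operatorname{Im}\Phi $. Thus $\pi $ is the composition of the $\mathbb{R}$-linear isomorphism $(\xi ,\eta )\mapsto (\xi ,\operatorname{Re}(c)\xi -\operatorname{Im}(c)\eta )$ of $\mathbb{R}^{2}$ with $\Phi :\mathbb{C}\rightarrow \mathbb{C}$; since $\pi $ is a diffeomorphism, $\Phi $ is an injective entire map, hence affine, $\Phi (w)=\alpha w+\beta $ with $\alpha \neq 0$. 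Replacing the global conformal parameter $w$ by $\Phi (w)$ (still a global conformal parameter) and translating it to absorb the remaining constant, we obtain $x=u$ and $y=\operatorname{Re}(c(u+iv))=au-b^{\prime }v$ with $b^{\prime }=\operatorname{Im}c\neq 0$; after replacing $v$ by $-v$ if necessary --- which preserves isothermality --- this reads $x=u$, $y=au+bv$ with $b>0$, and $(u,v)$ are global isothermal parameters for $G_{f}$, as claimed.
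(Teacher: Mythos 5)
Your Steps 1 and 3 are fine: the induced metric dominates the flat metric of the base, so $G_{f}$ is complete; $x,y$ are harmonic with $\left\vert \nabla _{g}x\right\vert ,\left\vert \nabla _{g}y\right\vert \leq 1$ and independent differentials; and, granted a global conformal parameter on $\mathbb{C}$, the observation that $\phi _{2}/\phi _{1}$ omits a half-plane (because the Jacobian of the projection never vanishes), hence is constant by Liouville, followed by the normalisation $\Phi =2\int \phi _{1}\,dw$ being an injective entire map and therefore affine, is exactly the classical way the linear change $x=u$, $y=au+bv$ is extracted. Note, for the comparison you ask for, that the paper offers no proof at all: the statement is quoted verbatim from Osserman \cite[Theorem 5.1]{O}, so what you are really attempting is a reproof of Osserman's theorem.

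The genuine gap is Step 2, and it sits precisely at the heart of that theorem: you do not prove that $G_{f}$ is conformally $\mathbb{C}$ rather than $\mathbb{D}$. The ingredients you list are insufficient to force parabolicity. A rotationally symmetric complete surface with metric $dr^{2}+r^{3}d\theta ^{2}$ (for large $r$) is hyperbolic, since $\int^{\infty }r^{-3/2}dr<\infty $, yet $\rho =r^{2}$ is a proper exhaustion with $\Delta \rho $ bounded and $\left\vert \nabla \sqrt{\rho }\right\vert =1$; so ``proper exhaustion, $\Delta _{g}\rho \leq 4$, $\left\vert \nabla _{g}\sqrt{\rho }\right\vert $ bounded'' cannot, by itself, rule out a Green's function. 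Capacity or volume-growth criteria also fail here, because entire minimal graphs need not have quadratic area growth (the graph of $e^{z}$ has exponential area growth over discs), and the fallback ``invoke this classical fact directly (see \cite{O})'' is circular, since the classical fact recorded in \cite{O} is Theorem 5.1 itself. To close the gap you must use the specific structure you have and not just $\rho $: for instance, if $G_{f}$ were conformally $\mathbb{D}$, then $\pi =(x,y)$ would be a harmonic diffeomorphism of the disc onto all of $\mathbb{R}^{2}$, which is impossible by a theorem of Heinz; alternatively, follow Osserman's actual route, which bypasses uniformisation entirely by using the divergence-form consequences of the minimal surface equation to build a Lewy-type map $(x,y)\mapsto (x+p,y+q)$ that is a diffeomorphism of $\mathbb{R}^{2}$ onto $\mathbb{R}^{2}$ and furnishes global isothermal parameters, after which the linear normalisation is your Step 3. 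As it stands, the argument proves the theorem only modulo its main point.
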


Moreover the following identity is useful in the proofs.

\begin{Lagrange}
For two vectors $V=\left( v_{1},\cdots ,v_{n}\right) $, $W=\left( \omega
_{1},\cdots ,\omega _{n}\right) $ in $\mathbb{R}^{n}$ we have%
\begin{equation}
\left( \dsum\limits_{i=1}^{n}v_{i}^{2}\right) \left(
\dsum\limits_{i=1}^{n}\omega _{i}^{2}\right) -\left(
\dsum\limits_{i=1}^{n}v_{i}\omega _{i}\right) ^{2}=\dsum\limits_{i<j}\left(
v_{i}\omega _{j}-v_{j}\omega _{i}\right) ^{2}.
\end{equation}
\end{Lagrange}

\section{Proofs of the results}

Let $f:\mathbb{R}^{2}\rightarrow \mathbb{R}^{2}$, $f\left( x,y\right)
=\left( f_{1}\left( x,y\right) ,f_{2}\left( x,y\right) \right) $, $\left(
x,y\right) \in \mathbb{R}^{2}$, be an entire solution of the minimal surface
equation. Then, the graph 
\begin{equation*}
G_{f}=\left\{ \left( x,y,f_{1}\left( x,y\right) ,f_{2}\left( x,y\right)
\right) \in \mathbb{R}^{4}:\left( x,y\right) \in \mathbb{R}^{2}\right\}
\end{equation*}%
of $f$ is a minimal surface in $\mathbb{R}^{4}$. By virtue of the Theorem
2.1, we can introduce global isothermal parameters $\left( u,v\right) $, via
a non-singular transformation 
\begin{equation*}
x=u,\quad y=au+bv,
\end{equation*}%
where $a$, $b$ are real constants with $b>0$. Now, the minimal surface $%
G_{f} $ is parametrized via the map 
\begin{equation*}
X\left( u,v\right) =\left( u,au+bv,\varphi \left( u,v\right) ,\psi \left(
u,v\right) \right) ,
\end{equation*}%
where $\varphi \left( u,v\right) :=f_{1}\left( u,au+bv\right) $ and $\psi
\left( u,v\right) :=f_{2}\left( u,au+bv\right) $. Since $\left( u,v\right) $
are isothermal parameters, the vectors 
\begin{equation}
X_{u}=\left( 1,a,\varphi _{u},\psi _{u}\right) ,\quad X_{v}=\left(
0,b,\varphi _{v},\psi _{v}\right)
\end{equation}%
are orthogonal and of the same length, that is%
\begin{equation}
\begin{array}{l}
\varphi _{u}\varphi _{v}+\psi _{u}\psi _{v}=-ab, \\ 
\\ 
E=1+a^{2}+\varphi _{u}^{2}+\psi _{u}^{2}=b^{2}+\varphi _{v}^{2}+\psi
_{v}^{2}.%
\end{array}%
\end{equation}%
Moreover, the fact that the surface $G_{f}$ is minimal, implies that\ the
functions $\varphi $ and $\psi $ are harmonic, that is 
\begin{equation}
\varphi _{uu}+\varphi _{vv}=0,\quad \psi _{uu}+\psi _{vv}=0.
\end{equation}%
Appealing to the Lagrange's Identity and taking the relations $\left(
3.2\right) $ into account, we obtain 
\begin{eqnarray*}
E^{2} &=&b^{2}+\varphi _{v}^{2}+\psi _{v}^{2}+\left( a\varphi _{v}-b\varphi
_{u}\right) ^{2} \\
&&+\left( a\psi _{v}-b\psi _{u}\right) ^{2}+\left( \varphi _{u}\psi
_{v}-\varphi _{v}\psi _{u}\right) ^{2},
\end{eqnarray*}%
or equivalently, 
\begin{equation}
E^{2}=\left( 1+a^{2}+b^{2}\right) E-b^{2}+\left( \varphi _{u}\psi
_{v}-\varphi _{v}\psi _{u}\right) ^{2}.
\end{equation}%
We set $\Phi \left( u,v\right) =\left( \varphi \left( u,v\right) ,\psi
\left( u,v\right) \right) $. Because of the relation 
\begin{equation*}
\frac{\partial \left( \varphi ,\psi \right) }{\partial \left( u,v\right) }=%
\frac{\partial \left( f_{1},f_{2}\right) }{\partial \left( x,y\right) }\frac{%
\partial \left( x,y\right) }{\partial \left( u,v\right) }
\end{equation*}%
for the Jacobians, we have 
\begin{equation*}
J_{\Phi }=bJ_{f},
\end{equation*}%
where $J_{f}$, $J_{\Phi }$ stand for the Jacobians of $f$ and $\Phi ,$
respectively. So $\left( 3.4\right) $ becomes 
\begin{equation}
J_{\Phi }^{2}=E^{2}-\left( 1+a^{2}+b^{2}\right) E+b^{2},
\end{equation}%
a useful identity for us.

Now we are ready to give the proof of Theorem 1.1.

\begin{proof}[Proof of Theorem 1.1]
Since the Jacobian $J_{\Phi }$ is bounded, we conclude from $\left(
3.5\right) $ that $E$, and thus $\log E$, is bounded from above. On the
other hand, the Gaussian curvature $K$ of $G_{f}$ is given by%
\begin{equation*}
K=-\frac{\Delta \log E}{2E},
\end{equation*}%
where $\Delta $ is the usual Laplacian operator on the $\left( u,v\right) $%
-plane. Appealing to $\left( 2.1\right) $, we deduce that the Gaussian
curvature $K$ is non-positive. Consequently, $\Delta \log E\geq 0$ and thus
the function $\log E$ is a subharmonic function defined on the whole plane.
Since $\log E$ is also bounded from above, we deduce that $E$ is constant
and consequently $K$ is identically zero. Then it follows immediately from $%
\left( 2.2\right) $ that the graph $G_{f}$ of $f$ is totally geodesic and
hence a plane.
\end{proof}

\begin{remark}
In a similar way, we can prove the following result: Let $f:\mathbb{R}%
^{2}\rightarrow \mathbb{R}^{n}$,%
\begin{equation*}
f\left( x,y\right) =\left( f_{1}\left( x,y\right) ,f_{2}\left( x,y\right)
,\cdots ,f_{n}\left( x,y\right) \right) ,
\end{equation*}%
be a vector valued function, defined on the whole $\mathbb{R}^{2}$, which is
a solution of the minimal surface equation. If the quantity%
\begin{equation*}
\dsum\limits_{i<j}\left( \frac{\partial f_{i}}{\partial x}\frac{\partial
f_{j}}{\partial y}-\frac{\partial f_{j}}{\partial x}\frac{\partial f_{i}}{%
\partial y}\right) ^{2}
\end{equation*}%
is bounded, then the graph $G_{f}$ of $f$ is a plane in $\mathbb{R}^{n+2}$.
\end{remark}

Now, we show that one can get the well known J\"{o}rgens' result \cite{J} as
a consequence of Theorem 1.1.

\begin{proof}[Proof of J\"{o}rgens' Theorem]
Obviously $f_{xx}+f_{yy}\neq 0$ everywhere on $\mathbb{R}^{2}$. We consider
the function $\Theta :\mathbb{R}^{2}\rightarrow \mathbb{R}$, given by%
\begin{equation*}
\Theta =\frac{f_{xx}f_{yy}-f_{xy}^{2}-1}{f_{xx}+f_{yy}}.
\end{equation*}%
The function $\Theta $, thanks to our assumption, is identically zero, so $%
\Theta _{x}=\Theta _{y}=0$. On the other hand, one can readily verify that
the equations $\Theta _{x}=\Theta _{y}=0$ are equivalent to the minimal
surface equation for the function $g:\mathbb{R}^{2}\rightarrow \mathbb{R}%
^{2} $, defined by $g\left( x,y\right) =\left( f_{x}\left( x,y\right)
,f_{y}\left( x,y\right) \right) $. Moreover, we have $J_{g}=1$. So,
according to Theorem 1.1, the graph $G_{g}$ of $g$ is a plane and the result
is immediate.
\end{proof}

For the proof of Theorem 1.2, we need the following auxiliary result.

\begin{lemma}
Let $\Phi :\mathbb{R}^{2}\rightarrow \mathbb{R}^{2}$, $\Phi \left(
u,v\right) =\left( \varphi \left( u,v\right) ,\psi \left( u,v\right) \right)
,$ be a map, where $\varphi $ and $\psi $ are harmonic functions on $\mathbb{%
R}^{2}$, i.e., a harmonic map. Then $\inf \left\vert J_{\Phi }\right\vert =0$%
, unless $\Phi $ is an affine map.
\end{lemma}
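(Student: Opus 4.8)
The plan is to exploit the fact that for a harmonic map $\Phi=(\varphi,\psi):\mathbb{R}^2\to\mathbb{R}^2$, the Jacobian $J_\Phi=\varphi_u\psi_v-\varphi_v\psi_u$ is itself closely tied to holomorphic data. Writing $w=u+iv$ and using that each harmonic function is the real part of a holomorphic function, one has $\varphi_u-i\varphi_v=2\partial\varphi/\partial w=:g_1(w)$ and $\psi_u-i\psi_v=:g_2(w)$ with $g_1,g_2$ entire. A direct computation gives
\begin{equation*}
J_\Phi=\varphi_u\psi_v-\varphi_v\psi_u=-\,\mathrm{Im}\big(\overline{g_1}\,g_2\big)
=\tfrac{1}{2i}\big(\overline{g_1}g_2-g_1\overline{g_2}\big).
\end{equation*}
So the first step is to record this identity and the analogous one for the ``conformality defect''; the key structural observation is that $J_\Phi$, while not holomorphic, is a real-analytic function whose vanishing set and growth are controlled by the entire functions $g_1,g_2$.

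Next I would argue by contradiction: suppose $\inf|J_\Phi|=c>0$. Then $J_\Phi$ never vanishes, so (being continuous on the connected plane) it has constant sign; WLOG $J_\Phi\geq c>0$ everywhere. The idea is to derive from this a bound forcing $g_1,g_2$ to be constant. One clean route: consider the auxiliary holomorphic function $h:=g_1\overline{g_2}-\overline{g_1}g_2$ is not holomorphic, so instead I would look at $|g_1|^2+|g_2|^2=\varphi_u^2+\varphi_v^2+\psi_u^2+\psi_v^2$ and at $J_\Phi$ together. By Lagrange's Identity applied to $(\varphi_u,\psi_u)$ and $(\varphi_v,\psi_v)$, or more directly from the Cauchy--Schwarz-type inequality $2|J_\Phi|\le|g_1|^2+|g_2|^2$, a lower bound on $|J_\Phi|$ alone gives no upper control. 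The better move is to feed this back into the \emph{minimal surface} setting of $\S3$: the identity $(3.5)$, $J_\Phi^2=E^2-(1+a^2+b^2)E+b^2$, shows $E\to\infty$ would be forced if $J_\Phi$ is large, but here $\Phi$ is a general harmonic map, so instead I would use that $E:=1+a^2+\varphi_u^2+\psi_u^2$ is subharmonic (as in the proof of Theorem 1.1, $\log E$ subharmonic when $K\le0$) — but again the hypothesis here is just harmonicity, not minimality.

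So the honest approach is purely complex-analytic. With $g_1,g_2$ entire and $J_\Phi=\frac{1}{2i}(\overline{g_1}g_2-g_1\overline{g_2})$ bounded away from $0$, I claim $\Phi$ must be affine, i.e. $g_1,g_2$ constant. Consider the entire function $G:=g_1+ig_2$ and $\widetilde G:=g_1-ig_2$; then $|G|^2-|\widetilde G|^2 = (|g_1|^2+|g_2|^2+2\,\mathrm{Re}(\overline{g_1}(ig_2)))-(\cdots) $, and one computes $|G|^2-|\widetilde G|^2 = 4\,\mathrm{Re}(i\,\overline{g_1}g_2)=-4\,\mathrm{Im}(\overline{g_1}g_2)=4J_\Phi$. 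Hence $|G|^2-|\widetilde G|^2=4J_\Phi\ge 4c>0$, so $|G|^2\ge 4c+|\widetilde G|^2\ge 4c$, meaning $G$ is a nowhere-vanishing entire function, and moreover $|\widetilde G/G|^2=1-4J_\Phi/|G|^2\le 1-4c/|G|^2<1$, so the meromorphic (in fact holomorphic, since $G\ne0$) function $\widetilde G/G$ is entire and bounded by $1$; by Liouville it is a constant $\lambda$ with $|\lambda|<1$. Then $\widetilde G=\lambda G$, i.e. $g_1-ig_2=\lambda(g_1+ig_2)$, which expresses $g_2$ as a constant multiple of $g_1$: $g_2 = i\frac{1-\lambda}{1+\lambda}g_1 =: \mu g_1$ (note $1+\lambda\ne0$ since $|\lambda|<1$). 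Substituting back, $4J_\Phi=|G|^2-|\widetilde G|^2=(1-|\lambda|^2)|G|^2=(1-|\lambda|^2)|1+\mu|^2|g_1|^2$ wait — $G=g_1+ig_2=(1+i\mu)g_1$, so $4J_\Phi=(1-|\lambda|^2)|1+i\mu|^2|g_1|^2$, which is a positive constant times $|g_1|^2$. But $|g_1|^2\ge 4c/((1-|\lambda|^2)|1+i\mu|^2)>0$, so $g_1$ is a nowhere-vanishing entire function; yet we also need an \emph{upper} bound to conclude $g_1$ is constant. The upper bound comes from boundedness of $J_\Phi$: if $J_\Phi\le C$, then $|g_1|^2\le 4C/((1-|\lambda|^2)|1+i\mu|^2)$, so $g_1$ is a bounded entire function, hence constant by Liouville, and then $g_2=\mu g_1$ is constant too, so $\varphi,\psi$ are affine and $\Phi$ is an affine map. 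The main obstacle is getting the sign/normalization bookkeeping in the identity $4J_\Phi=|G|^2-|\widetilde G|^2$ exactly right (it may come out with the opposite sign, in which case one swaps the roles of $G$ and $\widetilde G$), and noticing that the hypothesis ``$\inf|J_\Phi|=0$ fails'' must be read as ``$J_\Phi$ is bounded below \emph{and} above in absolute value'' — here the statement only assumes $\inf|J_\Phi|>0$, but for a harmonic map $J_\Phi=\frac14(|G|^2-|\widetilde G|^2)$ is automatically either $\ge 0$ everywhere or $\le0$ everywhere once it's nonvanishing, and the Liouville argument on the bounded quotient $\widetilde G/G$ (or $G/\widetilde G$) needs only the lower bound; the resulting $g_1$ is then forced to be constant because $\widetilde G/G$ constant already pins down the ratio, and $4J_\Phi=(1-|\lambda|^2)|G|^2$ with $J_\Phi\ge c>0$ shows $G$ is zero-free, while re-examining shows we actually get that $J_\Phi$ bounded below suffices only to reach ``$\Phi$ has no critical points'' unless we also know $J_\Phi$ is bounded above — so the cleanest statement to prove is exactly the contrapositive: if $\Phi$ is not affine, then $g_1$ or $g_2$ is non-constant entire, the quotient $\widetilde G/G$ if constant would force (via the above) $g_1$ non-constant zero-free with $J_\Phi$ a constant multiple of $|g_1|^2$ which is unbounded below near... no, $|g_1|^2>0$; rather, if $\widetilde G/G$ is \emph{non}-constant then by the little Picard / openness its modulus comes arbitrarily close to $1$ from below or exceeds $1$ somewhere, forcing $|G|^2-|\widetilde G|^2$ close to $0$, i.e. $\inf|J_\Phi|=0$; and if $\widetilde G/G\equiv\lambda$ is constant with $|\lambda|\ne1$, then $J_\Phi=\frac14(1-|\lambda|^2)|G|^2$ and since $G$ non-constant entire, $\inf|G|^2=0$ hence $\inf|J_\Phi|=0$; the remaining case $|\lambda|=1$ gives $J_\Phi\equiv0$. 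In every non-affine case $\inf|J_\Phi|=0$, which is the claim.
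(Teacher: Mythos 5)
Your identity $4J_\Phi=|G|^2-|\widetilde G|^2$, with $G=g_1+ig_2=2\Phi_z$ and $\widetilde G=g_1-ig_2=2\overline{\Phi_{\bar z}}$ both entire, is exactly the paper's equation (3.6), so you are on the paper's route. The place where you go astray is the claim that you ``also need an upper bound'' on $J_\Phi$ to conclude: you do not. Once $J_\Phi\ge c>0$ gives $|G|^2\ge|\widetilde G|^2+4c\ge 4c$, the entire function $G$ omits an entire disc about the origin, hence is constant (Picard, or Liouville applied to the bounded entire function $1/G$); then $|\widetilde G|^2\le |G|^2-4c$ is bounded, so $\widetilde G$ is constant by Liouville, and $\Phi$ is affine. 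This two-line finish is what the paper does (phrased for $\Phi_z$ and $\Phi_{\bar z}$), and it makes your detour through the quotient $\widetilde G/G$ and the ensuing case analysis unnecessary; that detour is also where your argument is least solid, since the step ``$\widetilde G/G$ non-constant implies its modulus gets arbitrarily close to $1$, hence $\inf|J_\Phi|=0$'' needs a genuine justification (e.g.\ Picard for meromorphic functions applied to values on the unit circle, plus attention to the case $G\equiv 0$) rather than an appeal to ``openness.'' Your final contrapositive version is essentially correct once those points are patched, but the direct argument above is cleaner and is the one to keep.
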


\begin{proof}
Suppose in the contrary that $\Phi $ is not affine and $\inf \left\vert
J_{\Phi }\right\vert =c>0.$ Hence $\left\vert J_{\Phi }\right\vert \geq c>0$%
. Assume at first that $J_{\Phi }\geq c>0$. We view $\Phi $ as a complex
valued function $\Phi :\mathbb{C\rightarrow C}$, $\Phi =\varphi +i\psi $.
Then, for $z=u+iv$, we have%
\begin{equation*}
\Phi _{z}=\frac{1}{2}\left( \varphi _{u}+\psi _{v}\right) +\frac{i}{2}\left(
\psi _{u}-\varphi _{v}\right)
\end{equation*}%
and%
\begin{equation*}
\Phi _{\overline{z}}=\frac{1}{2}\left( \varphi _{u}-\psi _{v}\right) +\frac{i%
}{2}\left( \psi _{u}+\varphi _{v}\right) .
\end{equation*}%
A simple calculation shows that%
\begin{equation}
J_{\Phi }=\left\vert \Phi _{z}\right\vert ^{2}-\left\vert \Phi _{\overline{z}%
}\right\vert ^{2}.
\end{equation}%
Furthermore, since $\varphi $ and $\psi $ are harmonic functions it follows
that the function $\Phi _{z}$ is holomorphic. From our assumption and $%
\left( 3.6\right) $ we get%
\begin{equation}
\left\vert \Phi _{z}\right\vert ^{2}\geq \left\vert \Phi _{\overline{z}%
}\right\vert ^{2}+c\geq c>0.
\end{equation}%
Since $\Phi _{z}$ is an entire holomorphic function, Picard's Theorem
implies that $\Phi _{z}$ must be constant. Therefore, there are real
constants$\ \kappa ,\lambda $ such that%
\begin{equation*}
\varphi _{u}+\psi _{v}=2\kappa \text{ \ and \ }\psi _{u}-\varphi
_{v}=2\lambda .
\end{equation*}%
Then from $\left( 3.7\right) $ we deduce that%
\begin{equation*}
\left( \psi _{v}-2\kappa \right) ^{2}+\left( \psi _{u}-2\lambda \right)
^{2}\leq \kappa ^{2}+\lambda ^{2}-c.
\end{equation*}%
By the harmonicity of the functions $\psi _{v}-2\kappa $, $\psi
_{u}-2\lambda $ and the Liouville's Theorem, we deduce that $\varphi $ and $%
\psi $ are affine functions, which contradicts our assumptions.

Assume now that $J_{\Phi }\leq -c<0$. In this case, we consider the complex
valued function $\widetilde{\Phi }=\psi +i\varphi $. Since $J_{\widetilde{%
\Phi }}=-J_{\Phi }\geq c>0$, proceeding as above we deduce that $\widetilde{%
\Phi }$ is affine, and consequently $\Phi $ is affine. This is again a
contradiction. Thus $\inf \left\vert J_{\Phi }\right\vert =0$, and the proof
is concluded.
\end{proof}

\begin{proof}[Proof of Theorem 1.2]
Assume that $G_{f}$ is not a plane and that%
\begin{equation*}
\inf_{K<0}\frac{\left\vert K_{N}\right\vert }{\left\vert K\right\vert }>0.
\end{equation*}%
We introduce global isothermal parameters $\left( u,v\right) $ such that the
minimal surface $G_{f}$ is parametrized via the map%
\begin{equation*}
X\left( u,v\right) =\left( u,au+bv,\varphi \left( u,v\right) ,\psi \left(
u,v\right) \right) ,
\end{equation*}%
where $a,b$ are real constants with $b>0$.

We claim that $\left( a,b\right) =\left( 0,1\right) $. Arguing indirectly,
we assume that $\left( a,b\right) \neq \left( 0,1\right) $. Differentiating $%
\left( 3.2\right) $ with respect to $u,v$ and taking $\left( 3.3\right) $
into account, we find%
\begin{equation}
\begin{array}{l}
\varphi _{uu}\varphi _{v}+\varphi _{u}\varphi _{uv}=-\psi _{uu}\psi
_{v}-\psi _{u}\psi _{uv}, \\ 
\\ 
\varphi _{uu}\varphi _{u}-\varphi _{v}\varphi _{uv}=-\psi _{uu}\psi
_{u}+\psi _{v}\psi _{uv}.%
\end{array}%
\end{equation}%
Squaring both of them and summing we obtain%
\begin{equation}
\left( \varphi _{u}^{2}+\varphi _{v}^{2}\right) \left( \varphi
_{uu}^{2}+\varphi _{uv}^{2}\right) =\left( \psi _{u}^{2}+\psi
_{v}^{2}\right) \left( \psi _{uu}^{2}+\psi _{uv}^{2}\right) .
\end{equation}%
Consider the following subset of $\mathbb{R}^{2}$%
\begin{equation*}
M_{0}=\left\{ \left( u,v\right) \in \mathbb{R}^{2}:\omega \left( u,v\right)
=0\right\} ,
\end{equation*}%
where 
\begin{equation*}
\omega \left( u,v\right) :=\left( \varphi _{u}^{2}+\varphi _{v}^{2}\right)
\left( \varphi _{uu}^{2}+\varphi _{uv}^{2}\right) ,
\end{equation*}%
or, equivalently, in view of $\left( 3.9\right) $%
\begin{equation*}
\omega \left( u,v\right) =\left( \psi _{u}^{2}+\psi _{v}^{2}\right) \left(
\psi _{uu}^{2}+\psi _{uv}^{2}\right) .
\end{equation*}%
We claim that the complement $M_{1}=\mathbb{R}^{2}-M_{0}$ is dense in $%
\mathbb{R}^{2}$. To this purpose it is enough to show that the interior, $%
\limfunc{int}\left( M_{0}\right) $, of $M_{0}$ is empty. Assume in the
contrary that $\limfunc{int}\left( M_{0}\right) \neq \emptyset $ and let $U$
be a connected component of $\limfunc{int}\left( M_{0}\right) $. Then it
follows easily that the analytic functions $\varphi $ and $\psi $ are
affine. Thus, by analyticity, $G_{f}$ is a plane, which is a contradiction.

In the sequel, we work on $M_{1}$. By virtue of $\left( 3.8\right) $, we get 
\begin{equation}
\begin{array}{l}
\varphi _{uv}=\dfrac{-\left( \varphi _{u}\psi _{u}+\varphi _{v}\psi
_{v}\right) \psi _{uv}-J_{\Phi }\psi _{uu}}{\varphi _{u}^{2}+\varphi _{v}^{2}%
}, \\ 
\\ 
\varphi _{uu}=\dfrac{J_{\Phi }\psi _{uv}-\left( \varphi _{u}\psi
_{u}+\varphi _{v}\psi _{v}\right) \psi _{uu}}{\varphi _{u}^{2}+\varphi
_{v}^{2}}.%
\end{array}%
\end{equation}%
The vector fields%
\begin{equation*}
\xi =\left( -b\varphi _{u}+a\varphi _{v},-\varphi _{v},b,0\right) ,\text{%
\quad }\eta =\left( -b\psi _{u}+a\psi _{v},-\psi _{v},0,b\right)
\end{equation*}%
are normal to $G_{f}$ and satisfy%
\begin{equation*}
\left\vert \xi \right\vert ^{2}\left\vert \eta \right\vert ^{2}-\left\langle
\xi ,\eta \right\rangle ^{2}=b^{2}E^{2}.
\end{equation*}%
We, easily, check that the vector fields $\left\{ e_{1},e_{2};\xi _{3},\xi
_{4}\right\} $ given by%
\begin{equation*}
\begin{array}{ll}
e_{1}=\dfrac{1}{\sqrt{E}}X_{u}, & e_{2}=\dfrac{1}{\sqrt{E}}X_{v}, \\ 
\xi _{3}=\dfrac{1}{\left\vert \xi \right\vert }\xi , & \xi _{4}=\dfrac{1}{%
b\left\vert \xi \right\vert E}\left( \left\vert \xi \right\vert ^{2}\eta
-\left\langle \xi ,\eta \right\rangle \xi \right) ,%
\end{array}%
\end{equation*}%
constitute an orthonormal frame field along $G_{f}$. Moreover, $\xi _{3}$
and $\xi _{4}$ are normal to $G_{f}$. Then a straightforward computation
shows that the coefficients of the second fundamental form are given by%
\begin{equation*}
\begin{array}{ll}
h_{11}^{3}=-\dfrac{b\varphi _{uu}}{E\left\vert \xi \right\vert }, & 
h_{11}^{4}=\dfrac{\left\langle \xi ,\eta \right\rangle \varphi
_{uu}-\left\vert \xi \right\vert ^{2}\psi _{uu}}{E^{2}\left\vert \xi
\right\vert }, \\ 
h_{12}^{3}=-\dfrac{b\varphi _{uv}}{E\left\vert \xi \right\vert }, & 
h_{12}^{4}=\dfrac{\left\langle \xi ,\eta \right\rangle \varphi
_{uv}-\left\vert \xi \right\vert ^{2}\psi _{uv}}{E^{2}\left\vert \xi
\right\vert }.%
\end{array}%
\end{equation*}%
So using $\left( 2.1\right) $ and $\left( 2.3\right) $ and $\left(
3.10\right) $, we find%
\begin{equation}
K=\frac{1}{E^{3}}\frac{\psi _{uu}^{2}+\psi _{uv}^{2}}{\varphi
_{u}^{2}+\varphi _{v}^{2}}\left( 2b^{2}-\left( 1+a^{2}+b^{2}\right) E\right)
\end{equation}%
and%
\begin{equation}
K_{N}=\frac{2b}{E^{3}}\frac{\psi _{uu}^{2}+\psi _{uv}^{2}}{\varphi
_{u}^{2}+\varphi _{v}^{2}}J_{\Phi }.
\end{equation}

The second equation of $\left( 3.2\right) $, yields%
\begin{equation*}
E\geq \frac{1+a^{2}+b^{2}}{2}.
\end{equation*}%
Hence,%
\begin{equation*}
2b^{2}-\left( 1+a^{2}+b^{2}\right) E\leq -\frac{1}{2}\left( a^{2}+\left(
b-1\right) ^{2}\right) \left( a^{2}+\left( b+1\right) ^{2}\right) <0.
\end{equation*}%
This shows that%
\begin{equation*}
M_{1}\subset \left\{ \left( u,v\right) \in \mathbb{R}^{2}:K\left( u,v\right)
<0\right\} .
\end{equation*}%
Moreover,%
\begin{equation*}
\frac{K_{N}^{2}}{K^{2}}=4b^{2}W\left( E\right) ,
\end{equation*}%
where $W\left( t\right) $ is the increasing real valued function%
\begin{equation*}
W\left( t\right) :=\frac{t^{2}-\left( 1+a^{2}+b^{2}\right) t+b^{2}}{\left(
\left( 1+a^{2}+b^{2}\right) t-2b^{2}\right) ^{2}},\text{ \ \ \ \ }t\geq 1.
\end{equation*}%
From our assumption $\inf_{K<0}\frac{\left\vert K_{N}\right\vert }{%
\left\vert K\right\vert }>0$, we get 
\begin{equation*}
\inf_{M_{1}}\frac{\left\vert K_{N}\right\vert }{\left\vert K\right\vert }>0%
\text{.}
\end{equation*}%
Since $W\left( t\right) $ is increasing, we have%
\begin{equation*}
\inf_{M_{1}}\frac{K_{N}^{2}}{K^{2}}=4b^{2}W\left( \inf_{M_{1}}E\right) .
\end{equation*}%
Hence $W\left( \inf_{M_{1}}E\right) >0$ or, equivalently,%
\begin{equation*}
\left( \inf_{M_{1}}E\right) ^{2}-\left( 1+a^{2}+b^{2}\right)
\inf_{M_{1}}E+b^{2}>0\text{.}
\end{equation*}%
Appealing to the identity $\left( 3.5\right) $, we deduce that $%
\inf_{M_{1}}\left\vert J_{\Phi }\right\vert >0$. By continuity, and bearing
in mind the fact that $M_{1}$ is dense in $\mathbb{R}^{2}$, we infer that $%
\left\vert J_{\Phi }\right\vert $ is bounded from below away from zero. On
the other hand $\Phi \left( u,v\right) =\left( \varphi \left( u,v\right)
,\psi \left( u,v\right) \right) $ is a harmonic map. Therefore, according to
Lemma 3.2, $G_{f}$ is a plane which contradicts our assumptions. Thus $%
\left( a,b\right) =\left( 0,1\right) $ and the equations $\left( 3.2\right) $
become%
\begin{equation*}
\begin{array}{l}
\varphi _{u}\varphi _{v}+\psi _{u}\psi _{v}=0, \\ 
\\ 
\varphi _{u}^{2}+\psi _{u}^{2}=\varphi _{v}^{2}+\psi _{v}^{2}.%
\end{array}%
\end{equation*}%
So, $\varphi _{u}=\pm \psi _{v}$, $\varphi _{v}=\mp \psi _{u}$ and $G_{f}$
is a complex analytic curve.
\end{proof}

\begin{proof}[Proof of Corollary 1.3]
In the case where $K\equiv 0$, $G_{f}$ is a plane. Consider now the case
where $K$ is not identically zero. According to Theorem 1.2 we have $c=0,$
unless $G_{f}$ is a complex analytic curve. We claim that the case $c=0$\
does not occur. Indeed, arguing indirectly suppose that $c=0$. As in the
proof of Theorem 1.2, the set $M_{1}$ is dense in $\mathbb{R}^{2}$. From the
assumption $K_{N}=cK$, we conclude that $K_{N}=0$ in $M_{1}$. Furthermore,
the relation $\left( 3.12\right) $ yields that $J_{\Phi }=0$. Taking into
account the identity $\left( 3.5\right) ,$ we get that $E$ is constant,
which implies that $K$ is identically zero, a contradiction. Therefore, $%
G_{f}$ is a complex analytic curve.
\end{proof}

\end{document}